\documentclass[11pt,reqno]{amsart}
\usepackage{etoolbox}
\makeatletter
\patchcmd{\@sect}
  {\@hangfrom{\hskip #3\relax\@svsec}{\interlinepenalty\@M #8\par}}
  {\ifnum#2=\@ne
     \centering\interlinepenalty\@M\hskip #3\relax\@svsec#8\par
   \else
     \@hangfrom{\hskip #3\relax\@svsec}{\interlinepenalty\@M #8\par}
   \fi}
  {}{\PackageError{manuscript}{Section heading patch failed}{}}
\makeatother
\usepackage[a4paper,textwidth=154mm,hcentering,top=28mm,bottom=28mm]{geometry}
\usepackage{amsmath,amssymb,amsthm,booktabs,hyperref}
\hypersetup{colorlinks=true,urlcolor=blue,linkcolor=blue,citecolor=blue,filecolor=blue,pdfborder={0 0 0}}
\numberwithin{equation}{section}
\newtheorem{theorem}{Theorem}[section]
\newtheorem{lemma}[theorem]{Lemma}
\newtheorem{proposition}[theorem]{Proposition}
\newtheorem{corollary}[theorem]{Corollary}
\theoremstyle{definition}

\theoremstyle{remark}

\theoremstyle{definition}
\newtheorem{problem}[theorem]{Problem}
\newtheorem{conjecture}[theorem]{Conjecture}
\newcommand{\Rpoly}[1]{\mathcal R_{#1}^{\vee}}
\DeclareMathOperator{\conv}{conv}
\DeclareMathOperator{\Ehr}{Ehr}
\DeclareMathOperator{\Vol}{Vol}
\title[The Athanasiadis--Chapoton ordinal-sum conjecture]{A proof of the Athanasiadis--Chapoton ordinal-sum conjecture}
\author{Congyi Luo}
\address{School of Data Science, Fudan University, Shanghai 200433, China}
\email{cyluo24@m.fudan.edu.cn}
\keywords{Preorder, ordinal sum, Ehrhart series, reflexive polytope, piecewise unimodular map}
\date{}
\begin{document}
\begin{abstract}
Given a finite preorder $\tau$, we study how its order relation determines the $h^*$-polynomial of the associated polar preorder polytope. Let $\Rpoly{\tau}$ be the convex hull of the negative standard basis vectors and the indicator vectors of nonempty order ideals, and write $h_\tau^*(t)=h^*(\Rpoly{\tau},t)$. Athanasiadis and Chapoton conjectured that this enumerative invariant takes ordinal sums to products:
\[
 h_{\tau_1\oplus\tau_2}^*(t)=h_{\tau_1}^*(t)\,h_{\tau_2}^*(t).
\]
We prove the conjecture for arbitrary finite preorders, including those with nontrivial equivalence classes. More precisely, we construct a positively homogeneous piecewise unimodular shear from the free sum $\Rpoly{\tau_1}\oplus\Rpoly{\tau_2}$ onto $\Rpoly{\tau_1\oplus\tau_2}$, giving lattice-point bijections at every nonnegative real dilation scale. The key is a formula for the Minkowski functional in terms of the least nonnegative order-reversing majorant: after the shear, the functional becomes the sum of the two factor functionals. Together with the Ehrhart product formula for free sums, this proves the conjecture and the corresponding product formula for normalized volume.
\end{abstract}
\maketitle
\section{Introduction and main results}
Athanasiadis and Chapoton~\cite[Conjecture~6.3]{AC} conjectured that the ordinal sum of finite preorders corresponds to multiplication of the $h^*$-polynomials of their polar preorder polytopes. We prove this conjecture and give an explicit piecewise unimodular map realizing the factorization.

\subsection{The ordinal-sum conjecture and the product formula}
We first describe the objects in the conjecture. Let $\tau$ be a preorder on a finite set $E$, that is, a reflexive and transitive relation $\le_\tau$, not necessarily antisymmetric. A subset $I\subseteq E$ is an \emph{order ideal} if $j\in I$ and $i\le_\tau j$ imply $i\in I$. Write $\mathcal J(\tau)$ for the set of all order ideals, $e_i$ for the standard basis vectors of $\mathbb R^E$, and $\mathbf1_I=\sum_{i\in I}e_i$ for the indicator vector of $I$. Following the notation of~\cite[Section~6]{AC}, the polar preorder polytope is
\begin{equation}\label{polytope}
 \Rpoly{\tau}
 =\conv\bigl(\{-e_i:i\in E\}\cup
       \{\mathbf1_I:\varnothing\ne I\in\mathcal J(\tau)\}\bigr).
\end{equation}
Here $\conv$ denotes the convex hull and $\vee$ denotes polar duality. This polytope has dimension $|E|$ and contains the origin in its interior; a direct proof is given in Section~\ref{sec:gauge}. If $E=\varnothing$, we set $\Rpoly{\varnothing}=\{0\}\subset\mathbb R^\varnothing$.

Put $d=|E|$. For an integer $m\ge0$, write
$L_\tau(m)=\#(m\Rpoly{\tau}\cap\mathbb Z^E)$. Ehrhart's theorem gives
\begin{equation}\label{ehrdefinition}
 \Ehr_\tau(t):=\sum_{m\ge0}L_\tau(m)t^m
       =\frac{h_\tau^*(t)}{(1-t)^{d+1}},
 \qquad h_\tau^*(t):=h^*(\Rpoly{\tau},t),
\end{equation}
where $h_\tau^*(t)$ is a polynomial of degree at most $d$ and $t$ is a formal variable; see~\cite[Sections~3.4 and~3.5]{BR}. Thus $h_\tau^*(t)$, together with the size of the ground set, determines the lattice-point counts in all integral dilates. In particular, $h_\varnothing^*(t)=1$.

Suppose that $\tau_1,\tau_2$ have disjoint ground sets. Their \emph{ordinal sum} $\tau_1\oplus\tau_2$ retains the relations within each ground set and places every element of the first ground set below every element of the second. The conjecture of Athanasiadis and Chapoton is precisely the following product formula.

\begin{theorem}\label{main}
For finite preorders $\tau_1,\tau_2$ on disjoint ground sets,
\begin{equation}\label{hproduct}
 h^*(\Rpoly{\tau_1\oplus\tau_2},t)
       =h^*(\Rpoly{\tau_1},t)\,h^*(\Rpoly{\tau_2},t).
\end{equation}
\end{theorem}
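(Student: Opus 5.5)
The plan follows the strategy announced in the abstract. I would reduce \eqref{hproduct} to the Ehrhart product formula for free sums: if $P\subset\mathbb R^{E_1}$ and $Q\subset\mathbb R^{E_2}$ are lattice polytopes containing the origin in their interiors, and one of them is reflexive (or, more generally, if the Minkowski functionals add appropriately), then $h^*(P\oplus Q,t)=h^*(P,t)h^*(Q,t)$, where $P\oplus Q=\conv(P\times\{0\}\cup\{0\}\times Q)$. The polytope $\Rpoly{\tau}$ has vertices $-e_i$ and $\mathbf 1_I$, and I expect it to be reflexive (this is the sense of the word ``polar''), so the free-sum formula (Braun's theorem) applies. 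It then suffices to find a bijection $\Phi:\mathbb Z^{E_1\sqcup E_2}\to\mathbb Z^{E_1\sqcup E_2}$ that maps $m(\Rpoly{\tau_1}\oplus\Rpoly{\tau_2})\cap\mathbb Z^{E}$ onto $m\Rpoly{\tau_1\oplus\tau_2}\cap\mathbb Z^E$ for every $m\ge0$. Equivalently, $\Phi$ should intertwine the two Minkowski functionals.

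\emph{Step 1 (gauge formula).} For $x\in\mathbb R^E$, I would compute the Minkowski functional $\|x\|_\tau=\min\{\lambda\ge0:x\in\lambda\Rpoly{\tau}\}$. Writing $x$ as a combination $\sum_I c_I\mathbf 1_I-\sum_i a_ie_i$ with $c,a\ge0$ and minimising $\sum c_I+\sum a_i$, the vector $y=\sum c_I\mathbf 1_I$ ranges exactly over the nonnegative order-reversing functions on $E$ (those with $y_i\ge y_j$ whenever $i\le_\tau j$). Its cost $\sum_I c_I$ equals $\max y$, because the order ideals can be taken nested via the level sets of $y$. The condition $a\ge 0$ forces $y\ge x$. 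Hence
\[
\|x\|_\tau=\min_{y}\Bigl(\max_E y+\sum_{i}(y_i-x_i)\Bigr),
\]
with $y$ ranging over nonnegative order-reversing majorants of $x$. I expect this minimum to be attained at the least such majorant $\bar x$, since decreasing $y$ pointwise never increases either term. This gives $\|x\|_\tau=\max(\bar x)+\sum_i(\bar x_i-x_i)$, where $\bar x_i=\max(0,\max_{j\ge_\tau i}x_j)$. The same computation shows that the origin lies in the interior and that the facet inequalities have integer right-hand side $1$, which gives reflexivity.

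\emph{Step 2 (shear).} For $\tau=\tau_1\oplus\tau_2$ and $x=(x^{(1)},x^{(2)})$, the least majorant is $\bar x^{(2)}=\overline{x^{(2)}}^{\tau_2}$ on $E_2$ and $\bar x^{(1)}=\max\bigl(\overline{x^{(1)}}^{\tau_1},\,M\bigr)$ on $E_1$, where $M=\max\overline{x^{(2)}}$. I would define the shear $\Phi(u,v)=(u+M(v)\mathbf 1_{E_1}\text{-type correction},\,v)$, chosen so that the $E_1$-part becomes ``shifted by $M(v)$''. Concretely, I would take $\Phi(u,v)=(u+M(v)\mathbf 1,\,v)$ with $M(v)=\max(0,\max v)$. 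Then I would check, using the formula of Step 1, that
\[
\|\Phi(u,v)\|_{\tau_1\oplus\tau_2}=\|u\|_{\tau_1}+\|v\|_{\tau_2}.
\]
If this fails for negative parts of $u$, I would adjust the correction coordinatewise, for example by adding $\min(M,\cdot)$ terms, keeping $\Phi$ piecewise linear. Meanwhile, the free-sum functional is $\|(u,v)\|_{\oplus}=\|u\|_{\tau_1}+\|v\|_{\tau_2}$. The map $\Phi$ is positively homogeneous, piecewise given by integer unimodular shears (on the pieces where $\max v$ is attained by a fixed coordinate), and has inverse $(w,v)\mapsto(w-M(v)\mathbf 1,v)$. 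So $\Phi$ is a lattice bijection preserving every dilate. This yields $\Ehr_{\tau_1\oplus\tau_2}=\Ehr(\Rpoly{\tau_1}\oplus\Rpoly{\tau_2})$, and Braun's formula gives \eqref{hproduct}.

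The main obstacle is Step 2: verifying the additivity identity exactly. The interaction between $\max(\overline{x^{(1)}},M)$ and the correction term $\sum_{E_1}(\bar x_i-x_i)$ must cancel precisely, and the shift has to be designed so that the $\max$ over $E_1\sqcup E_2$ splits as $\max(\bar u)+M$. I would first test small cases, for instance single-point preorders, where the target is a cross-polytope versus a simplex-like polytope, to pin down the correct form of the shear before proving the identity in general.
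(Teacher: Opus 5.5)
Your plan follows the paper's own route. Your gauge $\max\bar x+\sum_i(\bar x_i-x_i)$ is exactly the formula $\|x_+\|_\infty+\|\widehat x-x\|_1$ of Proposition~\ref{gauge}, your shear $\Phi(u,v)=(u+\|v_+\|_\infty\mathbf 1_{E_1},v)$ is the paper's shear, and the reduction through the free sum is the same. No idea is missing. However, you leave unverified the step the whole proof rests on (the paper's Lemma~\ref{gadd}), and you hedge that the shear might need correcting for negative parts of $u$. It needs no correction.

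Put $M=\|v_+\|_\infty$ and $z=\Phi(u,v)$. For $i\in E_1$, the elements above $i$ in $\tau_1\oplus\tau_2$ are those above $i$ in $\tau_1$ together with all of $E_2$, and $\max(0,\max_k v_k)=M\ge0$. Hence
\[
 \bar z_i=\max\Bigl(M,\;M+\max_{j\ge_{\tau_1}i}u_j\Bigr)=M+\bar u_i,
 \qquad \bar z_k=\bar v_k\quad(k\in E_2).
\]
So $\bar z-z=(\bar u-u,\bar v-v)$. Also $\max\bar z=M+\max\bar u$, because $\bar u\ge0$ and $\max\bar v=M$ by Lemma~\ref{envelope}. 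Therefore
\[
 \|z\|_{\tau_1\oplus\tau_2}
 =\bigl(\max\bar u+\|\bar u-u\|_1\bigr)+\bigl(M+\|\bar v-v\|_1\bigr)
 =\|u\|_{\tau_1}+\|v\|_{\tau_2},
\]
whatever the signs of the $u_j$. The translation by $M$ is absorbed exactly by the maximum with $M$ that $E_2$ contributes above every element of $E_1$.

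Two smaller points. First, ``facet inequalities with integer right-hand side $1$'' is not the criterion for reflexivity. What you need is that the facet normals, normalised to right-hand side $1$, are integer vectors. This does follow from your formula, since each linear piece of the gauge has integer coefficients; alternatively, cite Athanasiadis--Chapoton, Proposition~6.1. The paper in fact bypasses Braun's theorem. Your formula makes the gauge integer-valued on lattice points, so the lattice points of the free sum split into layers indexed by integer pairs of scales $(r,s)$. Convolving the layer counts then gives $\Ehr_{\tau_1\oplus\tau_2}(t)=(1-t)\Ehr_{\tau_1}(t)\Ehr_{\tau_2}(t)$ directly. Second, the case of an empty ground set should be dispatched separately, which is trivial.
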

This answers~\cite[Conjecture~6.3]{AC} affirmatively. If $\tau=\tau_1\oplus\cdots\oplus\tau_k$, iteration gives
\[
 h_\tau^*(t)=\prod_{j=1}^k h_{\tau_j}^*(t).
\]
Nontrivial equivalence classes need not be split or collapsed: all lattice points are counted in the integer lattice indexed by the original ground set.

\subsection{The geometric theorem and the proof strategy}
The product formula arises from an explicit lattice-point bijection between two polytopes. For polytopes $P\subset\mathbb R^A$ and $Q\subset\mathbb R^B$ containing the origin, with $A\cap B=\varnothing$, their \emph{free sum} is
\[
 P\oplus Q=\conv\bigl((P\times\{0\})\cup(\{0\}\times Q)\bigr).
\]
We use the usual symbol $\oplus$ for both ordinal sums and free sums; the operands distinguish the two operations. For a real vector $z=(z_i)$, write
$z_+=(\max\{z_i,0\})_i$, and use the usual norms
$\|z\|_1=\sum_i|z_i|$ and $\|z\|_\infty=\max_i|z_i|$. Both norms of the empty vector are defined to be zero.

A map is \emph{piecewise unimodular} if the ambient space is covered by finitely many polyhedral cones on each of which the map is given by an integer matrix of determinant $\pm1$. The following map and its inverse both have this property.

\begin{theorem}\label{geometric}
Let $\tau_1,\tau_2$ be finite preorders on nonempty disjoint ground sets $A,B$. Define
$\Phi:\mathbb R^A\times\mathbb R^B\to\mathbb R^A\times\mathbb R^B$ by
\begin{equation}\label{shearmap}
 \Phi(x,y)=\bigl(x+\|y_+\|_\infty\mathbf1_A,\,y\bigr)
\end{equation}
Then $\Phi$ is a positively homogeneous piecewise unimodular homeomorphism, and
\begin{equation}\label{dilationmap}
 \Phi\bigl(m(\Rpoly{\tau_1}\oplus\Rpoly{\tau_2})\bigr)
       =m\Rpoly{\tau_1\oplus\tau_2}
       \qquad(m\ge0).
\end{equation}
For every real $m\ge0$, $\Phi$ restricts to a bijection between the lattice-point sets on the two sides of~\eqref{dilationmap}.
\end{theorem}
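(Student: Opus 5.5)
The plan is to compute the Minkowski functional (gauge) of $\Rpoly{\tau}$ explicitly, and then check that $\Phi$ turns the gauge of the free sum into the gauge of $\Rpoly{\tau_1\oplus\tau_2}$. Everything else then follows formally.

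\textbf{Step 1: the gauge of $\Rpoly{\tau}$.} Every $z\in\mathbb R^E$ can be written as $z=\sum_I\lambda_I\mathbf1_I-\sum_i\mu_ie_i$ with $\lambda,\mu\ge0$. The gauge is
\[
g_\tau(z)=\min\Bigl(\sum_I\lambda_I+\sum_i\mu_i\Bigr)
\]
over all such representations, with $I$ ranging over nonempty ideals. Put $f=\sum_I\lambda_I\mathbf1_I$.
\begin{itemize}
\item Since ideals are down-closed, $f$ is nonnegative and order-reversing: $i\le_\tau j$ implies $f(i)\ge f(j)$.
\item Conversely, by the layer-cake decomposition over the level sets $\{f\ge s\}$, which are ideals, every such $f$ is a combination with $\sum\lambda_I=\|f\|_\infty$.
\item The constraint $\mu\ge0$ says exactly $f\ge z$, and then $\sum\mu_i=\|f-z\|_1$.
\end{itemize}
Hence $g_\tau(z)=\min_f\bigl(\|f\|_\infty+\|f-z\|_1\bigr)$ over nonnegative order-reversing majorants $f$ of $z$. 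The pointwise least such majorant,
\[
f_z(i)=\max\bigl(0,\max_{j\ge_\tau i}z_j\bigr),
\]
minimizes both terms simultaneously. Therefore
\[
g_\tau(z)=\|z_+\|_\infty+\|f_z-z\|_1 .
\]
This formula is finite and positive away from $0$, which also shows that $0$ is interior and $\Rpoly{\tau}$ is full-dimensional. For the free sum I will use the standard fact that the gauge of $P\oplus Q$ is $g_P(x)+g_Q(y)$.

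\textbf{Step 2: the key identity.} Let $c=\|y_+\|_\infty$ and let $g$ be the gauge for $\tau_1\oplus\tau_2$. I will verify
\[
g\bigl(\Phi(x,y)\bigr)=g_{\tau_1}(x)+g_{\tau_2}(y).
\]
In $\tau_1\oplus\tau_2$ every element of $B$ lies above every element of $A$.
\begin{itemize}
\item For $j\in B$, the least majorant of $(x+c\mathbf1_A,y)$ is $f_y(j)$.
\item For $i\in A$, it is $\max\bigl(0,\,c,\,c+\max_{j\ge i}x_j\bigr)=c+f_x(i)$, using $c\ge0$.
\end{itemize}
Consequently the sup-norm term is $c+\|f_x\|_\infty$. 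The $\ell_1$ term is $\|f_x-x\|_1+\|f_y-y\|_1$, because the shift by $c$ cancels on $A$. Since $c=\|f_y\|_\infty=\|y_+\|_\infty$, the total is $g_{\tau_1}(x)+g_{\tau_2}(y)$. This computation is the heart of the proof. The main thing to get right is Step 1's claim that the least majorant minimizes both terms at once, and the bookkeeping that $\|f_y\|_\infty$ is exactly the shift $c$.

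\textbf{Step 3: the conclusions.} A polytope containing the origin in its interior is the sublevel set of its gauge, so $\{g\le m\}$ describes $m\Rpoly{\tau_1\oplus\tau_2}$ and $\{g_{\tau_1}+g_{\tau_2}\le m\}$ describes $m(\Rpoly{\tau_1}\oplus\Rpoly{\tau_2})$. The identity of Step 2 therefore gives~\eqref{dilationmap} for all real $m\ge0$. The remaining properties of $\Phi$ are as follows.
\begin{itemize}
\item $\Phi$ is positively homogeneous because $y\mapsto\|y_+\|_\infty$ is.
\item $\Phi$ is a homeomorphism, with continuous inverse $(x,y)\mapsto(x-\|y_+\|_\infty\mathbf1_A,\,y)$.
\item $\Phi$ is piecewise unimodular. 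On each of the finitely many cones $\{y_b\ge y_{b'}\ \forall b',\ y_b\ge0\}$ and $\{y\le0\}$, the map is the linear shear adding $y_b\mathbf1_A$ (respectively $0$). This has an integer unipotent matrix, and the same holds for its inverse.
\item Finally, $c\in\mathbb Z$ whenever $y\in\mathbb Z^B$, so $\Phi$ and $\Phi^{-1}$ preserve $\mathbb Z^A\times\mathbb Z^B$. Hence $\Phi$ restricts to lattice-point bijections at every real scale $m\ge0$.
\end{itemize}
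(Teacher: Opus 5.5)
Your proposal is correct and follows the paper's proof essentially step for step. It uses the gauge formula via the least nonnegative order-reversing majorant (Proposition~\ref{gauge}), the same additivity computation under the shear (Lemma~\ref{gadd}), the same cone decomposition for piecewise unimodularity (Lemma~\ref{pl}), and the free-sum sublevel description (Lemma~\ref{freesum}). The differences are minor. You derive the gauge as a minimum over conic representations, implicitly using $\sum_I\lambda_I\ge\|f\|_\infty$ (since $f_i\le\sum_I\lambda_I$), where the paper proves $\Rpoly{\tau}=\{\rho\le1\}$ directly; and you cite additivity of the free-sum gauge as standard, where the paper proves it.
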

Positive homogeneity means that $\Phi(cz)=c\Phi(z)$ for $c\ge0$. Thus the geometric correspondence at unit scale determines the correspondence at every scale. The inverse shear has the equally simple form
\[
 \Phi^{-1}(u,v)=\bigl(u-\|v_+\|_\infty\mathbf1_A,\,v\bigr).
\]
The main point is to prove~\eqref{dilationmap}: a piecewise linear map need not preserve convex hulls, so comparing vertices alone does not determine the image of the whole polytope.

We address this difficulty using the Minkowski functional. For a nonempty finite preorder $\tau$, let
\[
 \gamma_\tau(z)=\inf\{r\ge0:z\in r\Rpoly{\tau}\}.
\]
This is the smallest dilation factor for which the dilate contains $z$. Section~\ref{sec:gauge} gives an explicit formula for $\gamma_\tau$, from which Section~\ref{sec:shear} proves
\begin{equation}\label{introidentity}
 \gamma_{\tau_1\oplus\tau_2}\bigl(\Phi(x,y)\bigr)
       =\gamma_{\tau_1}(x)+\gamma_{\tau_2}(y).
\end{equation}
The right-hand side is the Minkowski functional of the free sum, so this identity proves Theorem~\ref{geometric}. The functionals take integer values on lattice points. Convolution over these integer scales in Section~\ref{sec:ehrhart} then yields Theorem~\ref{main}.

\subsection{Relation to earlier work}
Multiplicativity for free sums is already well understood. A lattice polytope $P$ containing the origin in its interior is called
\emph{reflexive} if its polar dual $P^\vee=\{u:\langle u,x\rangle\le1\text{ for all }x\in P\}$
is also a lattice polytope. Braun~\cite[Theorem~1]{BRAUN} proved that if $P$ is reflexive and $Q$ is a lattice polytope containing the origin in its interior, then
\[
 \Ehr_{P\oplus Q}(t)=(1-t)\Ehr_P(t)\Ehr_Q(t).
\]
Here, for a full-dimensional lattice polytope $P\subset\mathbb R^d$, $\Ehr_P(t)=\sum_{m\ge0}\#(mP\cap\mathbb Z^d)t^m$. Beck, Jayawant, and McAllister~\cite[Theorems~1.3 and~1.4]{BJM} established necessary and sufficient conditions for this product formula, as well as a multivariate version. For two full-dimensional lattice factors containing the origin in their interiors, the product formula holds if and only if at least one factor is reflexive.

Bruns~\cite[Sections~1 and~3]{BRUNS} also gave an algebraic interpretation: identifying the degree-one elements representing the origins in the two factors accounts for the factor $1-t$ in the series. Section~\ref{sec:ehrhart} derives this factor directly by grouping lattice points according to their dilation scales.

Athanasiadis and Chapoton proved that $\Rpoly{\tau}$ is reflexive~\cite[Proposition~6.1]{AC}, so the counting formula for the free sum applies. The missing geometric link is that the polytope of an ordinal sum is generally not affinely equivalent to the free sum. Theorem~\ref{geometric} supplies this link by an explicit piecewise unimodular homeomorphism.

Piecewise maps have long played a role in lattice-point enumeration for poset polytopes. Stanley's transfer map~\cite[Theorem~3.2]{ST86} relates the order polytope and the chain polytope of a poset and preserves the lattice-point counts in every integral dilate. The combinatorial mutations studied by Akhtar, Coates, Galkin, and Kasprzyk~\cite[Section~3]{ACGK} likewise preserve Ehrhart series through piecewise unimodular maps on the dual space. Higashitani~\cite[Definition~3.1 and Theorem~4.1]{HIG} applied this construction to order and chain polytopes. Our shear is a particular instance of such a dual mutation map; its parameters are given in Section~\ref{subsec:mutation}.

Reflexivity also implies symmetry of $h_\tau^*(t)$\cite[Theorem~2.1]{HIBI}. This symmetry neither explains the factorization under ordinal sums nor asserts invariance under reversing the preorder. Section~\ref{sec:discussion} states the latter conjecture separately from a stronger geometric question and explains which cases reduce to smaller factors. The appendix gives an explicit calculation when the equivalence classes are totally ordered.

\section{The Minkowski functional of a polar preorder polytope}\label{sec:gauge}
We determine the smallest dilation factor for $\Rpoly{\tau}$. Starting from a vector, we raise its coordinates to obtain a nonnegative order-reversing vector and then use negative standard basis vectors to recover the coordinate differences. The least such majorant gives the formula for the functional. This identifies the whole polytope and explains why lattice points have integer dilation scales.

Throughout this section, $E\ne\varnothing$, and inequalities between vectors are coordinatewise. We first ensure that the smallest dilation factor is defined everywhere.
\begin{lemma}\label{interior}
The polytope $\Rpoly{\tau}$ is full-dimensional and contains the origin in its interior.
\end{lemma}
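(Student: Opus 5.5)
The plan is to exhibit the origin as a strictly positive convex combination of vertices that affinely span $\mathbb R^E$. The key observation is that $E$ itself is always a nonempty order ideal, since $E\ne\varnothing$. Hence $\mathbf 1_E$ is one of the generating points in~\eqref{polytope}, and so are the points $-e_i$ for $i\in E$. This gives the identity
\[
 \frac1{d+1}\mathbf 1_E+\sum_{i\in E}\frac1{d+1}(-e_i)=0,\qquad d=|E|.
\]
It writes $0$ as a convex combination of the $d+1$ points $\mathbf 1_E,-e_i$ ($i\in E$), with all coefficients strictly positive.

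Next I will check that these $d+1$ points are affinely independent. It suffices that the differences $-e_i-\mathbf 1_E$ ($i\in E$) are linearly independent. These vectors equal $-(e_i+\mathbf 1_E)$, and the matrix $I+J$ is nonsingular: its eigenvalues are $1$ (with multiplicity $d-1$) and $d+1$. So the points are the vertices of a $d$-simplex $S\subseteq\Rpoly{\tau}$, and therefore $\Rpoly{\tau}$ is full-dimensional.

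Finally, a point written with all barycentric coordinates strictly positive lies in the interior of the simplex. Barycentric coordinates depend continuously on the point, so they stay positive on a neighbourhood of the origin. Hence $0\in\operatorname{int}S\subseteq\operatorname{int}\Rpoly{\tau}$. Consequently $\gamma_\tau$ is finite everywhere and positively homogeneous, as needed later in the section.

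There is no serious obstacle here; the only step needing care is confirming that $E\in\mathcal J(\tau)$. This holds trivially, because the order-ideal condition can never fail for the whole ground set, regardless of any nontrivial equivalence classes in $\tau$.
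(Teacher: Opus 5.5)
Your proof is correct and follows the paper's argument: the same simplex $\conv(\{-e_i:i\in E\}\cup\{\mathbf 1_E\})\subseteq\Rpoly{\tau}$, and the same barycentric expression for the origin with all weights equal to $1/(d+1)$. The only difference is cosmetic: you check affine independence via the spectrum of $I+J$, while the paper solves $\sum_i c_i(-e_i-\mathbf 1_E)=0$ directly, coordinate by coordinate.
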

\begin{proof}
Put $d=|E|$. Since $E$ is an order ideal, $\Rpoly{\tau}$ contains the simplex
\[
 S=\conv\bigl(\{-e_i:i\in E\}\cup\{\mathbf1_E\}\bigr).
\]
To prove full dimensionality, suppose that $\sum_i c_i(-e_i-\mathbf1_E)=0$ and put $c=\sum_i c_i$. The $j$th coordinate gives $c_j=-c$; summing yields $c=-dc$. Hence every $c_j=0$, and the vertices are affinely independent. Moreover,
\[
 0=\frac{1}{d+1}\left(\mathbf1_E+\sum_{i\in E}(-e_i)\right),
\]
all barycentric coordinates of the origin are positive. Thus the origin lies in the interior of $S$, and hence in the interior of $\Rpoly{\tau}$.
\end{proof}

\subsection{The least nonnegative order-reversing majorant}\label{sec:envelope}
Indicator vectors of order ideals are order-reversing. Their nonnegative combinations are therefore nonnegative order-reversing vectors. We show that the least such vector dominating a given vector is obtained by taking maxima over upper sets, and that its level sets express it as a combination of indicator vectors of order ideals.

A vector $v\in\mathbb R^E$ is \emph{order-reversing} if $i\le_\tau j$ implies $v_i\ge v_j$. For $z\in\mathbb R^E$, define its majorant $\widehat z$ by
\begin{equation}\label{envelopedef}
 \widehat z_i=\max_{j:\,i\le_\tau j}z_j^+,
 \qquad z_j^+=\max\{z_j,0\}.
\end{equation}
Reflexivity of the preorder ensures that every maximum is taken over a nonempty finite set. The notation $\widehat z$ always refers to the preorder under consideration.

\begin{lemma}\label{envelope}
The vector $\widehat z$ is the coordinatewise least among all nonnegative order-reversing vectors $v$ satisfying $v\ge z$, and
\begin{equation}\label{maxenvelope}
 \|\widehat z\|_\infty=\|z_+\|_\infty.
\end{equation}
\end{lemma}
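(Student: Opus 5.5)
We check directly from the definition~\eqref{envelopedef} that $\widehat z$ has the required properties, and then that it lies below every competitor.

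\emph{Step 1: $\widehat z$ is a nonnegative order-reversing majorant of $z$.}
Nonnegativity is immediate, since each $z_j^+\ge0$.
By reflexivity, $i$ belongs to its own index set $\{j: i\le_\tau j\}$, so $\widehat z_i\ge z_i^+\ge z_i$. Hence $\widehat z\ge z$.
For order reversal, suppose $i\le_\tau k$. Transitivity gives
\[
 \{j:k\le_\tau j\}\subseteq\{j:i\le_\tau j\},
\]
so the maximum defining $\widehat z_i$ is taken over a larger set than the one defining $\widehat z_k$. Therefore $\widehat z_i\ge\widehat z_k$.

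\emph{Step 2: $\widehat z$ is the least such vector.}
Let $v$ be any nonnegative order-reversing vector with $v\ge z$, and fix $i$.
For every $j$ with $i\le_\tau j$, order reversal gives $v_i\ge v_j$. We also have $v_j\ge z_j$ and $v_j\ge0$, so $v_j\ge z_j^+$. Combining these, $v_i\ge z_j^+$ for all such $j$. Taking the maximum over $j$ yields $v_i\ge\widehat z_i$.
This step is where transitivity and order reversal interact. It is the only point needing care, and it is straightforward.

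\emph{Step 3: the identity~\eqref{maxenvelope}.}
Every $\widehat z_i$ is a maximum of some of the numbers $z_j^+$. Hence
\[
 \widehat z_i\le\max_j z_j^+=\|z_+\|_\infty .
\]
Conversely, $\widehat z_i\ge z_i^+$ by Step 1, so $\max_i\widehat z_i\ge\|z_+\|_\infty$.
Since $\widehat z\ge0$, we have $\|\widehat z\|_\infty=\max_i\widehat z_i$. The two bounds therefore give $\|\widehat z\|_\infty=\|z_+\|_\infty$.

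No earlier result is needed beyond the definitions; in particular $E\neq\varnothing$ and reflexivity ensure that all the maxima exist.
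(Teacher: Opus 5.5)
Your proof is correct and follows the paper's argument essentially step for step: reflexivity and transitivity show that $\widehat z$ is a nonnegative order-reversing majorant, order reversal of $v$ gives minimality, and the two-sided bound gives $\|\widehat z\|_\infty=\|z_+\|_\infty$. The only cosmetic difference is that you bound $v_i\ge v_j\ge z_j^+$, whereas the paper uses $v_i\ge v_j\ge z_j$ together with $v_i\ge0$; also, your Step 2 uses only the order reversal of $v$, not transitivity.
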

\begin{proof}
By definition, $\widehat z_i\ge z_i^+\ge z_i$ and $\widehat z_i\ge0$. If $i\le_\tau j$, transitivity gives
$\{k:j\le_\tau k\}\subseteq\{k:i\le_\tau k\}$, so $\widehat z_i\ge\widehat z_j$.
Conversely, if $v$ is nonnegative and order-reversing with $v\ge z$, then for every $j\ge_\tau i$ we have $v_i\ge v_j\ge z_j$ and $v_i\ge0$. Hence $v_i\ge\widehat z_i$, proving minimality.

Every $\widehat z_i$ is at most $\|z_+\|_\infty$. Choose $j$ such that $z_j^+=\|z_+\|_\infty$. Reflexivity gives $\widehat z_j\ge z_j^+$, proving~\eqref{maxenvelope}.
\end{proof}

If $i\sim_\tau j$, meaning that $i\le_\tau j$ and $j\le_\tau i$, then $\widehat z_i=\widehat z_j$. Here $\sim_\tau$ is the equivalence relation associated with the preorder; the input coordinates $z_i,z_j$ may still differ. The relation between two equivalence classes is independent of their representatives, and two classes lying below each other must coincide. Thus the equivalence classes form a poset, called the \emph{quotient poset}.

\begin{lemma}\label{levels}
Let $v$ be a nonnegative order-reversing vector whose distinct positive coordinate values are $a_1<\cdots<a_s$. Put $a_0=0$ and $I_k=\{i:v_i\ge a_k\}$. Then each $I_k$ is a nonempty order ideal, and
\begin{equation}\label{leveldecomposition}
 v=\sum_{k=1}^s(a_k-a_{k-1})\mathbf1_{I_k},
 \qquad \sum_{k=1}^s(a_k-a_{k-1})=\|v\|_\infty.
\end{equation}
When $v=0$, both sums are empty.
\end{lemma}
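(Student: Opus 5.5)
The plan is to verify the three claims of Lemma~\ref{levels} in turn: each $I_k$ is a nonempty order ideal, the coordinatewise identity for $v$ holds, and the coefficients telescope to $\|v\|_\infty$. All three are elementary.

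\emph{The sets $I_k$.} Fix $k$ with $1\le k\le s$. Since $a_k$ is a value actually attained by $v$, some $i$ has $v_i=a_k$, so $I_k\ne\varnothing$. Suppose $j\in I_k$ and $i\le_\tau j$. Because $v$ is order-reversing, $v_i\ge v_j\ge a_k$, so $i\in I_k$. Hence $I_k$ is an order ideal. We also note that $I_1\supseteq I_2\supseteq\cdots\supseteq I_s$.

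\emph{The decomposition.} We check~\eqref{leveldecomposition} coordinate by coordinate. If $v_i=0$, then $v_i<a_1$, so $i$ lies in no $I_k$ and both sides vanish at $i$. Otherwise $v_i$ is positive, so $v_i=a_r$ for a unique $r$. Since the $a_k$ are strictly increasing, $i\in I_k$ exactly when $a_k\le a_r$, that is, when $k\le r$. The right-hand side at $i$ is therefore
\[
 \sum_{k=1}^r(a_k-a_{k-1})=a_r-a_0=a_r=v_i .
\]
The second identity is the same telescoping with $r=s$: the sum equals $a_s$. Nonnegativity of $v$ gives $\|v\|_\infty=\max_i v_i$, and this maximum is $a_s$ when $v\ne0$. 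When $v=0$ we have $s=0$, both sums are empty, and both identities read $0=0$.

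No step is a real obstacle. The only points needing care are the two uses of the hypotheses. The strict ordering of the $a_k$ is what makes membership in $I_k$ equivalent to $k\le r$. Nonnegativity of $v$ is what makes $\|v\|_\infty$ equal to the largest value $a_s$ rather than the largest absolute value.
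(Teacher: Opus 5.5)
Your proof is correct and follows the paper's argument essentially step for step. As in the paper, you get nonemptiness because $a_k$ is attained and the ideal property from order reversal. You then check the identity coordinatewise in the two cases $v_i=0$ and $v_i=a_r$, using $i\in I_k$ if and only if $k\le r$, and telescope to $a_s=\|v\|_\infty$.
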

\begin{proof}
The assertion holds for $v=0$, so suppose that $v\ne0$. Since $a_k$ is a coordinate value, $I_k$ is nonempty. If $j\in I_k$ and $i\le_\tau j$, then $v_i\ge v_j\ge a_k$, so $i\in I_k$.

If $v_i=0$, the $i$th coordinate of the right-hand side is zero. If $v_i=a_l$, then $i\in I_k$ if and only if $k\le l$, so its $i$th coordinate is
$\sum_{k=1}^l(a_k-a_{k-1})=a_l$. This proves the vector identity. The sum of all coefficients is $a_s=\|v\|_\infty$.
\end{proof}
Applying this decomposition to $\widehat z$ expresses it in terms of indicator vectors of order ideals with total coefficient $\|z_+\|_\infty$. Recovering $z$ from the nonnegative coordinate difference requires negative standard basis vectors with total coefficient $\|\widehat z-z\|_1$.

\subsection{The functional and integer dilation scales}
The sum of these two groups of coefficients is the smallest dilation factor. Its expression involves only maxima and two standard vector norms, allowing us to compare the polytopes of different preorders without computing their facet descriptions.

\begin{proposition}\label{gauge}
For every nonempty finite preorder $\tau$ and every $z\in\mathbb R^E$,
\begin{equation}\label{gdef}
 \gamma_\tau(z)=\|z_+\|_\infty+\|\widehat z-z\|_1.
\end{equation}
In particular, for every real $m\ge0$,
\begin{equation}\label{sublevel}
 m\Rpoly{\tau}=\{z:\gamma_\tau(z)\le m\},
 \qquad \gamma_\tau(\mathbb Z^E)\subseteq\mathbb Z_{\ge0}.
\end{equation}
\end{proposition}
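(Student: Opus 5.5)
We prove the formula by establishing the two inequalities separately. Write $g(z)=\|z_+\|_\infty+\|\widehat z-z\|_1$. Since $\Rpoly{\tau}$ is a compact convex set with the origin in its interior (Lemma~\ref{interior}), $\gamma_\tau$ is finite and positively homogeneous. Moreover, the infimum defining it is attained, and $m\Rpoly{\tau}=\{z:\gamma_\tau(z)\le m\}$ for every $m\ge0$. So the first part of~\eqref{sublevel} follows once~\eqref{gdef} is proved.

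\emph{Upper bound $\gamma_\tau(z)\le g(z)$.} Apply Lemma~\ref{levels} to $v=\widehat z$, which is nonnegative and order-reversing by Lemma~\ref{envelope}. This gives $\widehat z=\sum_k c_k\mathbf1_{I_k}$ with $c_k\ge0$, each $I_k$ a nonempty order ideal, and $\sum_k c_k=\|\widehat z\|_\infty=\|z_+\|_\infty$ by~\eqref{maxenvelope}. Since $\widehat z\ge z$, we can write
\[
 z=\sum_k c_k\mathbf1_{I_k}+\sum_{i\in E}(\widehat z_i-z_i)(-e_i),
\]
a nonnegative combination of vertices of $\Rpoly{\tau}$ whose total weight is $g(z)=:r$. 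If $r>0$, dividing by $r$ exhibits $z/r$ as a convex combination of these vertices, so $z\in r\Rpoly{\tau}$. If $r=0$, then $z=0$. Either way $\gamma_\tau(z)\le g(z)$.

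\emph{Lower bound $g(z)\le\gamma_\tau(z)$.} Suppose $z\in r\Rpoly{\tau}$. Then $z=\sum_I \lambda_I\mathbf1_I-\sum_i\mu_i e_i$ with $\lambda,\mu\ge0$ and $\sum\lambda_I+\sum\mu_i=r$; here we may include $I=\varnothing$ with $\mathbf1_\varnothing=0$ to allow total weight less than $r$, which is harmless. Put $v=\sum_I\lambda_I\mathbf1_I$. This vector is nonnegative and order-reversing, and $v\ge z$, so $v\ge\widehat z$ by Lemma~\ref{envelope}. Now estimate the two groups of coefficients:
\[
 \sum_I\lambda_I\ge\|v\|_\infty\ge\|\widehat z\|_\infty=\|z_+\|_\infty,
 \qquad
 \sum_i\mu_i=\|v-z\|_1=\sum_i(v_i-z_i)\ge\sum_i(\widehat z_i-z_i)=\|\widehat z-z\|_1 .
\]
Hence $r\ge g(z)$. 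Taking the infimum over $r$ gives $\gamma_\tau\ge g$.

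\emph{Integrality.} For $z\in\mathbb Z^E$, every $\widehat z_i$ is a maximum of integers $z_j^+$, so it is an integer. Therefore $\|z_+\|_\infty$ and $\|\widehat z-z\|_1$ are nonnegative integers. The main subtlety is the lower bound, namely that an arbitrary representation cannot beat the canonical one. The two estimates above handle it: monotonicity of $\widehat z$ controls the ideal part, and the coordinatewise inequality $v\ge\widehat z$ controls the $\ell_1$ part.
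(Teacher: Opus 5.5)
Your proof is correct and follows essentially the same route as the paper. The upper bound comes from the level-set decomposition of $\widehat z$ together with negative basis vectors. The lower bound comes from noting that the ideal part of any representation is a nonnegative order-reversing majorant of $z$, hence dominates $\widehat z$. The only differences are cosmetic: you argue directly at scale $r$ and cite standard Minkowski-functional facts for the sublevel identity, whereas the paper proves $\Rpoly{\tau}=\{\rho\le1\}$ and then scales by homogeneity, checking $m=0$ separately.
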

\begin{proof}
Denote the right-hand side of~\eqref{gdef} by $\rho(z)$. Since $\widehat z\ge z$, the second term is $\sum_i(\widehat z_i-z_i)$, with each summand nonnegative. We first prove that $\Rpoly{\tau}=\{z:\rho(z)\le1\}$.

If $z\in\Rpoly{\tau}$, the convex-hull definition gives an expression
\[
 z=p-\beta,\qquad
 p=\sum_{\varnothing\ne I\in\mathcal J(\tau)}\alpha_I\mathbf1_I,
 \qquad \alpha_I,\beta_i\ge0,\quad
 \sum_I\alpha_I+\sum_i\beta_i=1,
\]
where $\beta=(\beta_i)_{i\in E}$. Put $\lambda=\sum_I\alpha_I$. The vector $p$ is nonnegative, order-reversing, and dominates $z$, so Lemma~\ref{envelope} gives $\widehat z\le p$. Also, $z_i\le p_i\le\lambda$ and $\lambda\ge0$, whence $\|z_+\|_\infty\le\lambda$. Therefore
\[
 \rho(z)\le\lambda+\sum_i(p_i-z_i)
          =\lambda+\sum_i\beta_i=1.
\]

Conversely, if $\rho(z)\le1$, Lemma~\ref{levels} and~\eqref{maxenvelope} give an expression
\[
 \widehat z=\sum_{k=1}^s\alpha_k\mathbf1_{I_k},
 \qquad \alpha_k\ge0,\qquad
 \sum_k\alpha_k=\|z_+\|_\infty.
\]
Consequently,
\[
 z=\sum_{k=1}^s\alpha_k\mathbf1_{I_k}
       +\sum_{i\in E}(\widehat z_i-z_i)(-e_i).
\]
All coefficients are nonnegative and sum to $\rho(z)\le1$. Adding the origin with coefficient $1-\rho(z)$ gives a convex combination in $\Rpoly{\tau}$, so $z\in\Rpoly{\tau}$. If $\widehat z=0$, the first sum is empty and the argument is unchanged.

For $c\ge0$, the definition gives $\widehat{cz}=c\widehat z$, hence $\rho(cz)=c\rho(z)$. Thus, for $m>0$,
\[
 z\in m\Rpoly{\tau}
 \ \Longleftrightarrow\ z/m\in\Rpoly{\tau}
 \ \Longleftrightarrow\ \rho(z)\le m.
\]
If $\rho(z)=0$, then $z_+=0$ and $\widehat z=z$. The first equality implies $z\le0$, while the second, together with $\widehat z\ge0$, gives $z=0$. The same sublevel-set identity therefore holds when $m=0$.

It follows that the set of scales $r$ for which $z\in r\Rpoly{\tau}$ is exactly $[\rho(z),\infty)$. Taking the infimum yields $\gamma_\tau(z)=\rho(z)$ and establishes~\eqref{sublevel}. Finally, for an integer input, $\widehat z$ is an integer vector and both terms in~\eqref{gdef} are nonnegative integers.
\end{proof}

Integer dilation scales are a manifestation of reflexivity~\cite[Lemma~1]{BRAUN}. The stronger information needed here is the explicit formula~\eqref{gdef}: it separates the scale into the maximum positive coordinate and the coordinate differences from the majorant. The ordinal-sum shear acts precisely on these two parts.

\section{A shear from the free sum to the ordinal sum}\label{sec:shear}
In an ordinal sum, the upper sets in the first factor share a common part: the entire ground set of the second factor. Translating the first coordinate block separates this common part from the majorant, making the functional equal to the sum of the two factor functionals. This identity gives the polytope correspondence in Theorem~\ref{geometric}.

Throughout this section, both ground sets $A,B$ are nonempty. For $x\in\mathbb R^A$ and $y\in\mathbb R^B$, let $\widehat x,\widehat y$ be their majorants with respect to $\tau_1,\tau_2$, respectively. For $z\in\mathbb R^{A\cup B}$, the notation $\widehat z$ refers to the ordinal sum.

\subsection{Additivity of the functional}
In the ordinal sum, the majorant of every $A$-coordinate takes account of the maximum positive coordinate in $B$. Translating all $A$-coordinates by the same amount preserves the differences between coordinates and their majorants, while the remaining maximum term splits into the sum of the maxima for the two factors.

\begin{lemma}\label{gadd}
For all $x\in\mathbb R^A$ and $y\in\mathbb R^B$,
\begin{equation}\label{addidentity}
 \gamma_{\tau_1\oplus\tau_2}\bigl(\Phi(x,y)\bigr)
       =\gamma_{\tau_1}(x)+\gamma_{\tau_2}(y).
\end{equation}
\end{lemma}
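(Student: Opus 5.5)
Set $z=\Phi(x,y)=(u,y)$ with $u=x+M\mathbf1_A$ and $M=\|y_+\|_\infty$. We compute the majorant $\widehat z$ for the ordinal sum blockwise and then read off both terms of~\eqref{gdef} from Proposition~\ref{gauge}.

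\emph{The $B$-block.} For $b\in B$, every $k$ with $b\le_{\tau_1\oplus\tau_2}k$ lies in $B$ and satisfies $b\le_{\tau_2}k$. Hence $\widehat z_b=\widehat y_b$.

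\emph{The $A$-block.} For $a\in A$, the set $\{k:a\le k\}$ is $\{j\in A:a\le_{\tau_1}j\}\cup B$. Therefore
\[
\widehat z_a=\max\Bigl(\max_{j\ge_{\tau_1}a}(x_j+M)^+,\;M\Bigr).
\]
Since $M\ge0$, we have $\max\{(x_j+M)^+,M\}=x_j^++M$. It follows that $\widehat z_a=\widehat x_a+M$; this is the key computation. Consequently $\widehat z-z=(\widehat x-x,\ \widehat y-y)$, and
\[
\|\widehat z-z\|_1=\|\widehat x-x\|_1+\|\widehat y-y\|_1.
\]

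\emph{The max term.} By~\eqref{maxenvelope} applied to the ordinal sum, $\|z_+\|_\infty=\|\widehat z\|_\infty$. The $A$-block of $\widehat z$ has maximum $\|\widehat x\|_\infty+M=\|x_+\|_\infty+\|y_+\|_\infty$. The $B$-block has maximum $M$, which is no larger. Thus $\|z_+\|_\infty=\|x_+\|_\infty+\|y_+\|_\infty$.

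Adding the two computations and applying~\eqref{gdef} to $\tau_1$ and $\tau_2$ separately gives~\eqref{addidentity}. The only delicate step is the $A$-block identity. There one must use $M\ge0$ to absorb the positive-part operation, and nonemptiness of $B$ to ensure the term $M$ actually appears in the maximum. Both conditions hold here.
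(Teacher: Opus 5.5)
Your proposal is correct and follows essentially the same route as the paper's proof: you compute the ordinal-sum majorant blockwise, getting $\widehat z_a=\widehat x_a+\|y_+\|_\infty$ on $A$ and $\widehat z_b=\widehat y_b$ on $B$, and then substitute into Proposition~\ref{gauge}. The only cosmetic difference is that you obtain $\|z_+\|_\infty$ from $\|\widehat z\|_\infty$ via~\eqref{maxenvelope}, whereas the paper computes $\max\{b,\,b+\max_{i\in A}x_i\}$ directly.
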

\begin{proof}
Put $b=\|y_+\|_\infty$ and $z=(x+b\mathbf1_A,y)$. For $i\in A$, the elements above $i$ in the ordinal sum are precisely the elements above $i$ in the original factor together with all of $B$. Hence
\[
 \widehat z_i
 =\max\bigl\{b,\ b+\max_{a:\,i\le_{\tau_1}a}x_a\bigr\}
 =b+\widehat x_i.
\]
For $j\in B$, the elements above $j$ remain entirely in $B$, so $\widehat z_j=\widehat y_j$. Similarly,
\[
 \|z_+\|_\infty
 =\max\bigl\{b,\ b+\max_{i\in A}x_i\bigr\}
 =b+\|x_+\|_\infty.
\]
Thus $\widehat z-z=(\widehat x-x,\widehat y-y)$. Substitution into Proposition~\ref{gauge} gives
\begin{align*}
 \gamma_{\tau_1\oplus\tau_2}(z)
 &=b+\|x_+\|_\infty
       +\|\widehat x-x\|_1+\|\widehat y-y\|_1\\
 &=\gamma_{\tau_1}(x)+\gamma_{\tau_2}(y),
\end{align*}
where the last equality uses $b=\|y_+\|_\infty$.
\end{proof}

\subsection{Unimodular pieces and the image of the polytope}
To obtain a lattice-point bijection from the functional identity, we describe the inverse shear and its linear pieces. The pieces depend only on the second coordinate block, so the map and its inverse use the same cones.

\begin{lemma}\label{pl}
The map $\Phi$ is a positively homogeneous piecewise unimodular homeomorphism and maps $\mathbb Z^{A\cup B}$ bijectively to itself.
\end{lemma}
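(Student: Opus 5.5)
We prove Lemma~\ref{pl} by splitting the space according to which coordinate of $y$ realizes $\|y_+\|_\infty$. The map $\Phi$ only depends on $y$ through the scalar $b(y)=\|y_+\|_\infty=\max\bigl(\{0\}\cup\{y_j:j\in B\}\bigr)$, so we use the cones
\[
 C_0=\{(x,y):y\le0\},\qquad
 C_j=\{(x,y):y_j\ge0,\ y_j\ge y_k\text{ for all }k\in B\}\quad(j\in B).
\]
Each of these is a polyhedral cone cut out by finitely many homogeneous linear inequalities, and together they cover $\mathbb R^A\times\mathbb R^B$: if $y\le0$ we are in $C_0$, and otherwise any index $j$ attaining the maximum coordinate of $y$ gives $(x,y)\in C_j$.

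On $C_0$ we have $b=0$, so $\Phi$ is the identity. On $C_j$ we have $b=y_j$, so $\Phi(x,y)=(x+y_j\mathbf1_A,y)$. This is the linear map $I+\mathbf1_A e_j^{\mathsf T}$. Its matrix is integral and unipotent (block upper triangular with identity diagonal blocks), so its determinant is $1$. On overlaps of cones these formulas agree, because there $b$ is unambiguous. Hence $\Phi$ is a well-defined continuous piecewise unimodular map.

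Positive homogeneity follows from $\|(cy)_+\|_\infty=c\|y_+\|_\infty$ for $c\ge0$.

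For bijectivity we use the map $\Psi(u,v)=(u-\|v_+\|_\infty\mathbf1_A,v)$. Since $\Phi$ leaves the $B$-block unchanged, $\Psi\circ\Phi$ and $\Phi\circ\Psi$ are both the identity. Thus $\Phi$ is a bijection, and its inverse is $\Psi$. Now $\Psi$ is continuous, and by the same argument it is piecewise unimodular on the same cones, because the cones depend only on the $B$-block, which both maps fix. So $\Phi$ is a homeomorphism.

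Finally, for integer $y$ the scalar $\|y_+\|_\infty$ is an integer. Hence both $\Phi$ and $\Psi$ send $\mathbb Z^{A\cup B}$ into itself, and being mutual inverses they restrict to a bijection of $\mathbb Z^{A\cup B}$.

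None of these steps is a real obstacle. The only point needing care is checking that the cones really cover the whole space and that $\Phi$ is consistent on their overlaps. Both facts reduce to the statement that $\|y_+\|_\infty$ is the maximum of the finitely many linear forms $0$ and $y_j$.
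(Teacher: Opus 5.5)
Your proposal is correct and follows the paper's proof essentially step for step. You use the same cover by $C_-$ (your $C_0$) and the cones $C_j$, the same integer matrices $I+\mathbf1_Ae_j^{\mathsf T}$ of determinant one, the same explicit inverse shear fixing the $B$-block, and the same observation that $\|y_+\|_\infty$ is an integer for integer $y$.
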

\begin{proof}
The second coordinate block is unchanged, so
$\Phi^{-1}(u,v)=(u-\|v_+\|_\infty\mathbf1_A,v)$.
Direct composition gives the identity in both orders. The maximum of finitely many linear functions is continuous and commutes with nonnegative scaling. Hence $\Phi$ and its inverse are continuous and positively homogeneous.

Set
\[
 C_-=\{(x,y):y_j\le0\ (j\in B)\},\qquad
 C_j=\{(x,y):y_j\ge0,\ y_j\ge y_k\ (k\in B)\}.
\]
If every $y_j\le0$, the point lies in $C_-$. Otherwise, some $y_j$ attains a positive maximum and the point lies in $C_j$. These closed polyhedral cones therefore cover the whole space. On $C_-$, the map $\Phi$ is the identity. On $C_j$, its matrix and inverse matrix are
\[
 M_j=\begin{pmatrix}I_A&\mathbf1_Ae_j^{\mathsf T}\\0&I_B\end{pmatrix},
 \qquad
 M_j^{-1}=\begin{pmatrix}I_A&-\mathbf1_Ae_j^{\mathsf T}\\0&I_B\end{pmatrix}.
\]
Here $I_A,I_B$ are the corresponding identity matrices and $e_j\in\mathbb R^B$. Both matrices have integer entries and determinant one. On $C_j\cap C_k$ we have $y_j=y_k$, and on $C_-\cap C_j$ we have $y_j=0$, so the formulas agree on overlaps. Since $\Phi$ leaves $y$ unchanged, its inverse preserves these cones as well.

For an integer vector $y$, the number $\|y_+\|_\infty$ is an integer. Thus both $\Phi$ and $\Phi^{-1}$ preserve the integer lattice, proving the lattice-point bijection.
\end{proof}

The following description of the free sum shows that additivity of the functional is precisely the desired geometric correspondence.
\begin{lemma}\label{freesum}
For every real $m\ge0$,
\[
 m(\Rpoly{\tau_1}\oplus\Rpoly{\tau_2})
   =\{(x,y):\gamma_{\tau_1}(x)+\gamma_{\tau_2}(y)\le m\}.
\]
\end{lemma}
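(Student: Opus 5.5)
I will prove the case $m=1$ and then pass to general $m$ by homogeneity. For $m=0$ both sides are $\{0\}$: by Proposition~\ref{gauge}, $\gamma_{\tau_1}(x)+\gamma_{\tau_2}(y)\le0$ forces $x=0$ and $y=0$. For $m>0$, the scaling $z\mapsto z/m$ reduces to $m=1$. This uses that $\gamma_{\tau_1}$ and $\gamma_{\tau_2}$ are positively homogeneous, which was shown in the proof of Proposition~\ref{gauge} ($\rho(cz)=c\rho(z)$). So write $P=\Rpoly{\tau_1}$, $Q=\Rpoly{\tau_2}$, and prove $P\oplus Q=\{(x,y):\gamma_{\tau_1}(x)+\gamma_{\tau_2}(y)\le1\}$.

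\emph{Inclusion $\supseteq$.} Suppose $s=\gamma_{\tau_1}(x)$ and $t=\gamma_{\tau_2}(y)$ satisfy $s+t\le1$. If $s>0$, then $x/s\in P$ by \eqref{sublevel}. If $s=0$, then $x=0$, and we simply drop that term. Treat $y$ and $t$ the same way. We can then write
\[
 (x,y)=s\,(x/s,0)+t\,(0,y/t)+(1-s-t)\,(0,0).
\]
Here $(x/s,0)\in P\times\{0\}$ and $(0,y/t)\in\{0\}\times Q$. The origin lies in both sets by Lemma~\ref{interior}. Hence $(x,y)$ is a convex combination of points of $(P\times\{0\})\cup(\{0\}\times Q)$, so it lies in $P\oplus Q$.

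\emph{Inclusion $\subseteq$.} The right-hand side is convex, since $\gamma_{\tau_1}$ and $\gamma_{\tau_2}$ are gauges of convex bodies containing the origin. A gauge is sublinear: it is positively homogeneous, and it is subadditive because its sublevel sets are convex. The sum $(x,y)\mapsto\gamma_{\tau_1}(x)+\gamma_{\tau_2}(y)$ is therefore sublinear, and its sublevel set at level $1$ is convex. That set contains $P\times\{0\}$, because $\gamma_{\tau_2}(0)=0$ and $\gamma_{\tau_1}(x)\le1$ for $x\in P$. It contains $\{0\}\times Q$ for the same reason. Being convex, it contains the convex hull $P\oplus Q$.

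The only step needing care is subadditivity of $\gamma_\tau$. It follows either from the general gauge argument, or directly from the fact that $\{\gamma_\tau\le1\}=\Rpoly{\tau}$ is convex: write $z+w=(a+b)\bigl(\tfrac{a}{a+b}\tfrac{z}{a}+\tfrac{b}{a+b}\tfrac{w}{b}\bigr)$ with $a=\gamma_\tau(z)$, $b=\gamma_\tau(w)$, and handle the cases $a=0$ or $b=0$ trivially. I expect this to be routine, and the proof is otherwise formal.
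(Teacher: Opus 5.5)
Your proof is correct and essentially the paper's. The reduction to $m=1$ (with $m=0$ handled by $\gamma_\tau(z)=0$ only at $z=0$) is the same, and so is the decomposition $(x,y)=s(x/s,0)+t(0,y/t)+(1-s-t)(0,0)$ for $\supseteq$. The only variation is in $\subseteq$: you show the right-hand side is convex via subadditivity of the gauges, whereas the paper uses convexity of the factors to group any convex combination as $\alpha(p,0)+\beta(0,q)$ and then needs only positive homogeneity, which spares the subadditivity step.
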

\begin{proof}
First take $m=1$. Grouping a convex combination in the free sum according to the two coordinate blocks, we may write
$(x,y)=\alpha(p,0)+\beta(0,q)$, where $p\in\Rpoly{\tau_1}$, $q\in\Rpoly{\tau_2}$, and $\alpha,\beta\ge0$, $\alpha+\beta=1$. If a weight is zero, choose the corresponding $p$ or $q$ to be the origin. Positive homogeneity gives
$\gamma_{\tau_1}(x)+\gamma_{\tau_2}(y)\le\alpha+\beta=1$.

Conversely, put $r=\gamma_{\tau_1}(x)$ and $s=\gamma_{\tau_2}(y)$, and suppose that $r+s\le1$. If $r>0$, take $p=x/r$, which belongs to $\Rpoly{\tau_1}$ by Proposition~\ref{gauge}. If $r=0$, then $x=0$ and we take $p=0$. Similarly choose $q\in\Rpoly{\tau_2}$ with $y=sq$. Then
\[
 (x,y)=r(p,0)+s(0,q)+(1-r-s)(0,0)
\]
belongs to the free sum. Positive homogeneity gives the result for $m>0$ by scaling. When $m=0$, the sum of the two functionals vanishes if and only if $x=y=0$.
\end{proof}

\begin{proof}[Proof of Theorem~\ref{geometric}]
Lemmas~\ref{gadd} and~\ref{freesum}, together with Proposition~\ref{gauge}, give
\[
 (x,y)\in m(\Rpoly{\tau_1}\oplus\Rpoly{\tau_2})
 \quad\Longleftrightarrow\quad
 \Phi(x,y)\in m\Rpoly{\tau_1\oplus\tau_2}.
\]
The bijections on the whole space and on the integer lattice from Lemma~\ref{pl} now imply all the assertions.
\end{proof}

\subsection{Relation to combinatorial mutation}\label{subsec:mutation}
The shear also fits the standard form of combinatorial mutation. This identifies the relation between the ordinal sum and the free sum with a specific class of piecewise transformations, beyond their Ehrhart equivalence, and provides a setting for the geometric question about opposite preorders.

In coordinates for the standard integer lattice and its dual, the dual map of a combinatorial mutation takes the form~\cite[Section~3]{ACGK}~\cite[Definition~3.1]{HIG}
\begin{equation}\label{mutationmap}
 \varphi_{w,F}(z)=z-\min_{v\in F}\langle z,v\rangle\,w,
\end{equation}
where $w$ is a primitive integer vector, meaning that it is not an integer multiple of another integer vector by a scalar of absolute value greater than one, and $F$ is a lattice polytope in the hyperplane $w^\perp$. If $Q$ is a full-dimensional rational polytope containing the origin and $\varphi_{w,F}(Q)$ is convex, then $Q$ and its image are related by a combinatorial mutation on the dual side~\cite[Proposition~3.4 and Definition~3.5]{HIG}. Our free sum is a full-dimensional lattice polytope containing the origin in its interior, so it lies in this class.

In our setting, take
\[
 w=(\mathbf1_A,0),\qquad
 F=\conv\bigl(\{0\}\cup\{(0,-e_j):j\in B\}\bigr).
\]
Since $A\ne\varnothing$, the vector $w$ is primitive, and clearly $F\subseteq w^\perp$. For $z=(x,y)$,
\[
 \min_{v\in F}\langle z,v\rangle
       =\min\bigl(\{0\}\cup\{-y_j:j\in B\}\bigr)
       =-\|y_+\|_\infty.
\]
Thus $\varphi_{w,F}=\Phi$, and Theorem~\ref{geometric} ensures that its image on the free sum is indeed convex. Here we use only the definition of the piecewise mutation map on the dual space; Ehrhart equivalence has already been proved directly above.

\section{The product formula and its consequences}\label{sec:ehrhart}
The shear reduces lattice-point enumeration to adding the dilation scales in the two factors. Since both scales are nonnegative integers on lattice points, we may count the new lattice points at each integer scale and then convolve these counts. This gives the product identity for Ehrhart series and completes the proof of the main theorem.

\subsection{Counting by integer dilation scales}
The integer-valued property of the functionals means that each lattice point first appears at an integer scale. Differences between the counts in consecutive dilates record these layers and turn Lemma~\ref{gadd} into a convolution. First suppose that $A,B$ are nonempty and write
\[
L_1(m)=L_{\tau_1}(m),\qquad L_2(m)=L_{\tau_2}(m),
\qquad L_1(-1)=L_2(-1)=0,
\]
For integers $r\ge0$, define $a_i(r)=L_i(r)-L_i(r-1)$.

\begin{lemma}\label{shell}
The number $a_i(r)$ counts the lattice points in the $i$th factor whose dilation scale is exactly $r$. If
$A_i(t)=\sum_{r\ge0}a_i(r)t^r$, then
\begin{equation}\label{shellseries}
A_i(t)=(1-t)\Ehr_{\tau_i}(t).
\end{equation}
\end{lemma}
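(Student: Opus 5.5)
The plan is to combine the sublevel description of dilates in Proposition~\ref{gauge} with the integrality of $\gamma_{\tau_i}$ on lattice points. By~\eqref{sublevel}, for every integer $m\ge0$,
\[
 L_i(m)=\#\{z\in\mathbb Z^{E_i}:\gamma_{\tau_i}(z)\le m\},
\]
where $E_1=A$ and $E_2=B$. Every lattice point has an integer scale $\gamma_{\tau_i}(z)\in\mathbb Z_{\ge0}$. The condition $\gamma_{\tau_i}(z)\le m$ is therefore equivalent to $\gamma_{\tau_i}(z)=r$ for some integer $0\le r\le m$. This partitions the lattice points of $m\Rpoly{\tau_i}$ into disjoint layers
\[
 S_i(r)=\{z\in\mathbb Z^{E_i}:\gamma_{\tau_i}(z)=r\},\qquad 0\le r\le m,
\]
and each layer is finite because it is contained in the bounded set $r\Rpoly{\tau_i}$.

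For $r\ge1$ we have
\[
 \{\gamma_{\tau_i}\le r\}\cap\mathbb Z^{E_i}
 =\bigl(\{\gamma_{\tau_i}\le r-1\}\cap\mathbb Z^{E_i}\bigr)\sqcup S_i(r),
\]
using integrality once more: a lattice point with $r-1<\gamma_{\tau_i}(z)\le r$ must have $\gamma_{\tau_i}(z)=r$. Hence $a_i(r)=L_i(r)-L_i(r-1)=\#S_i(r)$. For $r=0$, the convention $L_i(-1)=0$ gives $a_i(0)=L_i(0)=1$. This agrees with $\#S_i(0)=1$, since $\gamma_{\tau_i}(z)=0$ forces $z=0$, as shown at the end of the proof of Proposition~\ref{gauge}.

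The series identity~\eqref{shellseries} is then the formal telescoping computation
\[
 (1-t)\sum_{m\ge0}L_i(m)t^m=\sum_{r\ge0}\bigl(L_i(r)-L_i(r-1)\bigr)t^r,
\]
which holds coefficientwise in $\mathbb Z[[t]]$ thanks to the convention $L_i(-1)=0$.

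No step here is a real obstacle. The only point needing care is the integrality of scales on lattice points. Without it, a lattice point could have a non-integer scale in $(r-1,r)$ and would then be counted in $a_i(r)$ without having scale exactly $r$. The last assertion of Proposition~\ref{gauge} excludes this, and the boundary case $r=0$ is covered by the fact that $\gamma_{\tau_i}$ vanishes only at the origin.
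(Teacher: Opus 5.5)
Your proposal is correct and takes essentially the same approach as the paper. Both proofs identify the exact-scale layer with the difference of consecutive sublevel sets using integrality from Proposition~\ref{gauge}, handle $r=0$ through the fact that only the origin has scale zero, and obtain~\eqref{shellseries} by a formal telescoping (reindexing) of the series.
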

\begin{proof}
For $r\ge1$, Proposition~\ref{gauge} and the integer-valued property give
\[
\{z\in\mathbb Z^{E_i}:\gamma_{\tau_i}(z)=r\}
 =\bigl(r\Rpoly{\tau_i}\cap\mathbb Z^{E_i}\bigr)
   \setminus\bigl((r-1)\Rpoly{\tau_i}\cap\mathbb Z^{E_i}\bigr),
\]
where $E_1=A,E_2=B$. The sublevel sets are nested, and the polytope is bounded, so these sets are finite. Taking cardinalities proves the assertion. When $r=0$, the only lattice point is the origin and $L_i(0)-L_i(-1)=1$, so the assertion still holds. Finally,
\begin{align*}
A_i(t)&=\sum_{r\ge0}L_i(r)t^r-\sum_{r\ge1}L_i(r-1)t^r\\
 &=\Ehr_{\tau_i}(t)-t\sum_{s\ge0}L_i(s)t^s
 =(1-t)\Ehr_{\tau_i}(t),
\end{align*}
where the second sum is reindexed by $s=r-1$. The equalities hold in the ring of formal power series.
\end{proof}

\begin{proof}[Proof of Theorem~\ref{main}]
First suppose that both ground sets are nonempty. The lattice-point bijection and the identity for the scales imply that, for integers $m\ge0$,
\begin{equation}\label{countconvolution}
L_{\tau_1\oplus\tau_2}(m)
 =\sum_{\substack{r,s\ge0\\r+s\le m}}a_1(r)a_2(s).
\end{equation}
Indeed, each lattice point in the source has a unique pair of integer scales $(r,s)$. Its total scale is at most $m$ if and only if $r+s\le m$. At fixed scales, the two coordinate blocks can be chosen independently, giving $a_1(r)a_2(s)$ choices. The layers are disjoint, so no lattice point is counted twice.

Multiply~\eqref{countconvolution} by $t^m$ and sum over $m\ge0$. For any fixed coefficient of $t^m$, only finitely many pairs $r,s$ satisfy $r+s\le m$, so we may interchange the sums in the ring of formal power series. This yields
\begin{align*}
\Ehr_{\tau_1\oplus\tau_2}(t)
 &=\sum_{r,s\ge0}a_1(r)a_2(s)\sum_{m\ge r+s}t^m\\
 &=\frac{A_1(t)A_2(t)}{1-t}
 =(1-t)\Ehr_{\tau_1}(t)\Ehr_{\tau_2}(t).
\end{align*}
Write $d_1=|A|$ and $d_2=|B|$. By Lemma~\ref{interior}, the dimensions of the two factors and the target are $d_1,d_2,d_1+d_2$, respectively. Substituting~\eqref{ehrdefinition}, the right-hand side becomes
\[
\frac{(1-t)h_{\tau_1}^*(t)h_{\tau_2}^*(t)}
 {(1-t)^{d_1+1}(1-t)^{d_2+1}}
 =\frac{h_{\tau_1}^*(t)h_{\tau_2}^*(t)}{(1-t)^{d_1+d_2+1}}.
\]
Multiplying both sides by the common denominator proves~\eqref{hproduct}.

If one ground set is empty, the ordinal sum is the other preorder and the $h^*$-polynomial of the zero-dimensional point is one, so the identity holds. If both ground sets are empty, it reads $1=1\cdot1$. Thus inserting or deleting empty factors does not affect any conclusion.
\end{proof}

This counting by layers is Braun's free-sum formula~\cite[Theorem~1]{BRAUN} in the present setting. The functional identity~\eqref{addidentity} transfers the count from the free sum to the ordinal sum, while Proposition~\ref{gauge} gives the integer-valued property directly.

The algebraic interpretation of Bruns~\cite[Section~1]{BRUNS} can be phrased using the following object:
\[
 S_P=\{(z,m):m\in\mathbb Z_{\ge0},\ z\in mP\cap\mathbb Z^d\},
 \qquad P\subset\mathbb R^d.
\]
Here $P$ is a lattice polytope. Convexity ensures that $S_P$ is closed under vector addition, making it a monoid, called the Ehrhart monoid of $P$. The last coordinate gives the degree, and counting elements by degree gives the Ehrhart series. The algebraic construction of the free sum identifies the elements $(0,1)$ in the two factors; the resulting degree-one binomial relation accounts for the factor $1-t$.

Our shear does not preserve addition. For example, when both factors are singletons, $\Phi(0,1)=(1,1)$ and $\Phi(0,-1)=(0,-1)$, whose sum differs from $\Phi(0,0)$. Thus the shear gives a lattice-point bijection preserving dilation scales, but cannot directly be viewed as an isomorphism of these monoids. This also explains the role of counting by layers here.

\subsection{Several factors and normalized volume}\label{sec:consequences}
Iterating the product formula along any finite ordinal sum also gives a product formula for normalized volume. Since normalized volume is the value of the $h^*$-polynomial at one, the two formulas have the same factorization.

For a full-dimensional lattice polytope $P\subset\mathbb R^d$, write $L_P(m)=\#(mP\cap\mathbb Z^d)$ for $m\in\mathbb Z_{\ge0}$, and put
$\Vol_{\mathbb Z}(P)=d!\operatorname{vol}_d(P)$, where the unit cube has Euclidean volume one. The normalized volume of a zero-dimensional point is defined to be one.

\begin{corollary}\label{multiple}
Let $k\ge1$ be an integer. If $\tau_1,\ldots,\tau_k$ are finite preorders on pairwise disjoint ground sets, then
\begin{align*}
 h_{\tau_1\oplus\cdots\oplus\tau_k}^*(t)
   &=\prod_{j=1}^k h_{\tau_j}^*(t),\\
 \Vol_{\mathbb Z}(\Rpoly{\tau_1\oplus\cdots\oplus\tau_k})
   &=\prod_{j=1}^k\Vol_{\mathbb Z}(\Rpoly{\tau_j}).
\end{align*}
\end{corollary}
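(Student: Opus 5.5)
The plan is induction on $k$, using Theorem~\ref{main} for the $h^*$ identity and then evaluating at $t=1$ for volume. For $k=1$ both identities are trivial. For $k\ge2$, the ordinal sum is associative as a relation on the disjoint union. Indeed, in $(\tau_1\oplus\cdots\oplus\tau_{k-1})\oplus\tau_k$ and in $\tau_1\oplus(\tau_2\oplus\cdots\oplus\tau_k)$, one has $i\le j$ exactly when either $i,j$ lie in the same $E_a$ and $i\le_{\tau_a}j$, or $i\in E_a$ and $j\in E_b$ with $a<b$. So we may write $\tau_1\oplus\cdots\oplus\tau_k=\sigma\oplus\tau_k$ with $\sigma=\tau_1\oplus\cdots\oplus\tau_{k-1}$, a preorder on the disjoint union of the first $k-1$ ground sets. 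Theorem~\ref{main} (which already covers empty factors) gives $h^*_{\sigma\oplus\tau_k}=h^*_\sigma h^*_{\tau_k}$, and the induction hypothesis then gives the product formula.

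For the volume, I would use the standard fact that for a full-dimensional lattice polytope $P\subset\mathbb R^d$ one has $h^*(P,1)=\Vol_{\mathbb Z}(P)$. This follows from \eqref{ehrdefinition}: the leading coefficient of $L_P(m)$ is $\operatorname{vol}_d(P)$, and comparing the pole of order $d+1$ at $t=1$ gives $h^*(P,1)/d!=\operatorname{vol}_d(P)$ (see \cite[Sections~3.4 and~3.5]{BR}). Lemma~\ref{interior} ensures each $\Rpoly{\tau_j}$ with nonempty ground set is full-dimensional of dimension $|E_j|$. The zero-dimensional case is consistent with the convention $\Vol_{\mathbb Z}=1=h^*_\varnothing(1)$. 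Evaluating the first identity at $t=1$ then yields the second.

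The only point needing care is that $\Rpoly{\tau}$ must be a lattice polytope for Ehrhart's theorem and the volume interpretation to apply. This is immediate, since its defining points $-e_i$ and $\mathbf 1_I$ are integral. Nothing else is an obstacle: the substantive work is entirely in Theorem~\ref{main}, and the volume statement is just its specialization at $t=1$.
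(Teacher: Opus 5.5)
Your proposal is correct and matches the paper's proof: induction on $k$ via associativity of the ordinal sum and Theorem~\ref{main}, then evaluation at $t=1$ using $h^*(P,1)=\Vol_{\mathbb Z}(P)$. The only difference is cosmetic. You justify the volume identity by comparing the order-$(d+1)$ pole at $t=1$, whereas the paper extracts $L_P(m)=\sum_j h_j^*\binom{m+d-j}{d}$ and compares leading coefficients in $m$.
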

\begin{proof}
An ordinal sum retains the internal relations of its factors and places each element of an earlier ground set below every element of a later ground set. This description is independent of parentheses, so ordinal sum is associative. For $k=1$ the formula is an identity. If it holds for the first $k-1$ factors, applying Theorem~\ref{main} to their ordinal sum and the $k$th factor proves it for $k$ factors. Empty factors may be deleted by the preceding convention.

To see why evaluation at $t=1$ gives volume, write $h^*(P,t)=\sum_{j=0}^d h_j^*t^j$. Extracting coefficients using
$(1-t)^{-d-1}=\sum_{m\ge0}\binom{m+d}{d}t^m$
yields, for $m\ge d$,
\[
L_P(m)=\sum_{j=0}^d h_j^*\binom{m+d-j}{d}.
\]
Each binomial coefficient is a polynomial in $m$ of degree $d$ with leading coefficient $1/d!$. The leading coefficient of $L_P$ is $\operatorname{vol}_d(P)$ by Ehrhart's theorem; see~\cite[Section~3.6]{BR}. Comparing leading coefficients gives
$d!\operatorname{vol}_d(P)=\sum_j h_j^*=h^*(P,1)$.
Evaluating the product formula for $h^*$ at $t=1$ gives the normalized-volume formula.
\end{proof}

\subsection{The smallest example without affine equivalence}
Even when each factor has only one element, the ordinal-sum polytope is not affinely equivalent to the free sum. This example shows both why a piecewise shear is needed and how it preserves the entire Ehrhart count.

When both factors have one element, $\Rpoly{\tau_i}=[-1,1]$. Its $m$th integral dilate contains $2m+1$ lattice points. The identity
$\sum_{m\ge0}t^m=(1-t)^{-1}$ and formal differentiation give
\[
\sum_{m\ge0}(2m+1)t^m
 =\frac{2t}{(1-t)^2}+\frac1{1-t}
 =\frac{1+t}{(1-t)^2}.
\]
Thus each factor has $h^*$-polynomial $1+t$. The source free sum is the diamond with vertices $(\pm1,0),(0,\pm1)$. The nonempty order ideals of the two-element chain are $\{1\}$ and $\{1,2\}$, so the target is
\[
\conv\{(-1,0),(0,-1),(1,0),(1,1)\}.
\]
The shear $\Phi(x,y)=(x+\max(0,y),y)$ is the identity on the lower half-plane and is $(x,y)\mapsto(x+y,y)$ on the upper half-plane. The two pieces of the source are the triangles generated by $0,(-1,0),(1,0),(0,-1)$ and by $0,(-1,0),(1,0),(0,1)$, respectively. Their images under the linear pieces are the lower and upper triangles of the target. They meet along $y=0$ and together form the target quadrilateral.

Let $P$ denote the target quadrilateral. The main theorem gives its $h^*$-polynomial as $(1+t)^2$ and its normalized area as four. Further coefficient extraction yields
\[
L_P(m)=\binom{m+2}{2}+2\binom{m+1}{2}+\binom m2=2m^2+2m+1
\]
for every $m\ge0$, with a binomial coefficient taken to be zero when its nonnegative upper index is less than two. The source is a parallelogram, whereas the target is not: the vertices of the target in boundary order are $(-1,0),(0,-1),(1,0),(1,1)$, and the sums of opposite vertices are $(0,0)$ and $(1,0)$. Affine maps preserve parallelograms, so the two polytopes are not affinely equivalent. Nevertheless, the piecewise shear gives a lattice-point bijection at every scale.

\section{Conclusions and further questions}\label{sec:discussion}
We have proved the ordinal-sum conjecture of Athanasiadis and Chapoton and explained its geometry: a single explicit piecewise unimodular shear relates the polar preorder polytope of an ordinal sum to the corresponding free sum. The least nonnegative order-reversing majorant gives a formula for the Minkowski functional, whose additivity turns the geometric correspondence into a convolution over lattice-point scales. Both the $h^*$-factorization and the normalized-volume product therefore hold for arbitrary finite preorders.

The result does not determine the $h^*$-polynomial of a general preorder. It suggests two related directions: whether reversing the order relation also admits a geometric explanation preserving lattice points, and whether the product formula comes from a direct combinatorial decomposition.

\subsection{Opposite preorders and geometric bijections}
First consider reversing every order relation. Write $\tau^{\mathrm{op}}$ for the opposite preorder, so that $i\le_{\tau^{\mathrm{op}}}j$ if and only if $j\le_\tau i$. This is the object denoted by $\tau^*$ in the original paper; we use the usual superscript $\mathrm{op}$ to distinguish it from $h^*$. Another conjecture of Athanasiadis and Chapoton is the following~\cite[Conjecture~6.2]{AC}.
\begin{conjecture}\label{dualconjecture}
For every finite preorder $\tau$,
\[
 h_\tau^*(t)=h_{\tau^{\mathrm{op}}}^*(t).
\]
\end{conjecture}
This conjecture compares the polytopes of a preorder and its opposite, rather than the coefficients of a single polynomial. Indeed, put $d=|E|$. Reflexivity and Hibi's theorem~\cite[Theorem~2.1]{HIBI} give
\[
 \Ehr_\tau(t^{-1})=(-1)^{d+1}t\Ehr_\tau(t).
\]
Substituting~\eqref{ehrdefinition} and simplifying with $(1-t^{-1})^{d+1}=(-1)^{d+1}t^{-d-1}(1-t)^{d+1}$ yields
\[
 t^{|E|}h_\tau^*(t^{-1})=h_\tau^*(t),
\]
This identity itself does not involve $\tau^{\mathrm{op}}$.

The main theorem reduces Conjecture~\ref{dualconjecture} to preorders that cannot be written as the ordinal sum of two nonempty preorders. Indeed,
\[
 \tau=\tau_1\oplus\cdots\oplus\tau_k
 \quad\Longrightarrow\quad
 \tau^{\mathrm{op}}=\tau_k^{\mathrm{op}}\oplus\cdots\oplus\tau_1^{\mathrm{op}}.
\]
If the conjecture holds for each factor, the two $h^*$-polynomials are products of the same factors in reverse order. Each nontrivial decomposition decreases the size of the ground set, so repeated decomposition terminates. In particular, when the quotient poset is a total order, each factor consists of a single equivalence class and is equal to its opposite. The conjecture therefore holds in this case.

Our construction suggests the following stronger question.
\begin{problem}\label{dualgeometry}
Let $\tau$ be a preorder on a finite set $E$. Does there exist a positively homogeneous piecewise unimodular homeomorphism
$T_\tau:\mathbb R^E\to\mathbb R^E$ such that
\[
 T_\tau(\Rpoly{\tau})=\Rpoly{\tau^{\mathrm{op}}}?
\]
If so, can $T_\tau$ be chosen as a composition of unimodular linear maps and finitely many maps of the form~\eqref{mutationmap}, with every intermediate polytope image remaining convex?
\end{problem}
An affirmative answer to the first question would give lattice-point bijections at all scales and hence prove Conjecture~\ref{dualconjecture}. The second question further asks that the correspondence arise from combinatorial mutation. Our single mutation connects the ordinal-sum polytope to the free sum; opposite preorders require new transformations on factors that are indecomposable under ordinal sum. For a general preorder, reversing the relation turns upper sets into lower sets, and there is no longer a common maximum that can translate all coordinates at once. The difficulty is to find local transformations that handle these different sets simultaneously while preserving convexity, rather than merely matching lattice points separately at each scale.

\subsection{A combinatorial interpretation of the product formula}
A second direction is to interpret the coefficients of $h_\tau^*(t)$ directly. The $h^*$-coefficients of a lattice polytope are always nonnegative integers~\cite[Section~3.5]{BR}, but this general fact does not identify objects naturally defined from the preorder that they count. Multiplicativity under ordinal sum places a concrete requirement on such an interpretation.

\begin{problem}\label{combinatorial}
Can one construct directly from the order relation of $\tau$ a finite set of combinatorial objects $\mathcal W(\tau)$ and a statistic
$\operatorname{stat}_\tau:\mathcal W(\tau)\to\{0,\ldots,|E|\}$ such that
\[
 h_\tau^*(t)=\sum_{w\in\mathcal W(\tau)}t^{\operatorname{stat}_\tau(w)},
\]
and such that ordinal sum corresponds to a natural bijection
\[
 \mathcal W(\tau_1\oplus\tau_2)
       \simeq\mathcal W(\tau_1)\times\mathcal W(\tau_2),
\]
under which the statistic is the sum of the two statistics on the right? The objects and their statistics should be defined directly from the order relation and be compatible with relabelling the ground set: each preorder isomorphism should induce a statistic-preserving bijection of objects, and these bijections should respect composition of isomorphisms.
\end{problem}
One concrete approach would be to use chains of order ideals carrying integer labels. The permitted chain lengths, label ranges, and relations between consecutive terms would be specified explicitly by $\tau$, and the statistic would be the sum of the label contributions at each step. The aim is then not merely to find a finite set with a prescribed generating function: ordinal-sum decomposition should correspond to splitting a chain, and relabelling the ground set should leave the statistic unchanged. The admissibility conditions must not depend on previously computed $h^*$-coefficients. Whether such a model exists remains open.

The appendix provides a case that may guide a general construction. If the equivalence classes are totally ordered and have sizes $d_1,\ldots,d_k$, one may take
\[
 \mathcal W(\tau)=\prod_{j=1}^k\{0,1,\ldots,d_j\},
 \qquad \operatorname{stat}_\tau(a_1,\ldots,a_k)=a_1+\cdots+a_k.
\]
In this model, multiplication comes from the Cartesian product, coefficient symmetry comes from
$(a_j)_j\mapsto(d_j-a_j)_j$, and reversing the preorder corresponds to reversing the order of the coordinate blocks. A model for general preorders compatible with these operations would turn the present geometric proof into a correspondence between combinatorial objects. What is missing is a uniform construction for factors indecomposable under ordinal sum: our functional formula determines their lattice-point scales but does not directly provide a finite model with a combinatorial statistic.

\clearpage
\appendix
\section*{Appendix: equivalence-class sizes and lattice-point enumeration}
The order ideals of a preorder are unions of equivalence classes, but the lattice-point counts of its polar polytope are not determined by the quotient poset alone: the sizes of the equivalence classes also enter the $h^*$-polynomial. Two successive calculations illustrate this dependence and give a family to which the main theorem applies directly.

Appendix~\ref{app:single} starts with a single equivalence class. Its polytope is a simplex with $h^*$-polynomial $1+t+\cdots+t^d$; lattice points in a half-open fundamental parallelepiped explain why each coefficient is one. Appendix~\ref{app:equivalence} then joins these preorders by ordinal sum, giving an explicit product when the equivalence classes are totally ordered. This shows why their sizes cannot be ignored and provides the simplest examples for the combinatorial-model question in Section~\ref{sec:discussion}.

\section{The simplex of a single equivalence class}\label{app:single}

Suppose that $\tau$ has $d\ge1$ elements, all equivalent. The only nonempty order ideal is the whole ground set, so $\Rpoly{\tau}$ is the simplex in Lemma~\ref{interior}. We prove that its $h^*$-polynomial is $1+t+\cdots+t^d$. The coefficients, all equal to one, can be seen directly from the lattice points in the cone over the simplex.

Label the ground set by $1,\ldots,d$ and put
\[
 v_0=(1,\ldots,1),\qquad v_i=-e_i\quad(1\le i\le d),
 \qquad S_d=\conv(v_0,\ldots,v_d).
\]
\begin{proposition}\label{equivalenceclass}
We have
\[
 \Ehr_{S_d}(t)=\frac{1+t+\cdots+t^d}{(1-t)^{d+1}},\qquad
 h^*(S_d,t)=1+t+\cdots+t^d.
\]
\end{proposition}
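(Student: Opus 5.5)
The plan is the standard simplex computation: the $h^*$-coefficients of a lattice simplex count lattice points of the half-open fundamental parallelepiped of the cone over it, graded by height. Lift each vertex to $w_i=(v_i,1)\in\mathbb Z^{d+1}$ for $0\le i\le d$, and set
\[
 \Pi=\Bigl\{\sum_{i=0}^d\lambda_i w_i:0\le\lambda_i<1\Bigr\}.
\]
Every lattice point of the cone over $S_d$ decomposes uniquely as a point of $\Pi$ plus a nonnegative integer combination of the $w_i$. Summing the resulting geometric series gives
\[
 \Ehr_{S_d}(t)=\frac{\sum_{p\in\Pi\cap\mathbb Z^{d+1}}t^{\mathrm{ht}(p)}}{(1-t)^{d+1}},
\]
where $\mathrm{ht}$ denotes the last coordinate. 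This is standard (\cite[Section~3.3]{BR}), and I would cite it rather than reprove it.

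The main step is to determine $\Pi\cap\mathbb Z^{d+1}$ exactly. A point $\sum\lambda_iw_i$ has first $d$ coordinates $\lambda_0-\lambda_j$ for $j=1,\dots,d$, and height $\sum_{i=0}^d\lambda_i$. The conditions on the $\lambda_i$ are as follows.
\begin{itemize}
\item Integrality of $\lambda_0-\lambda_j$, together with $0\le\lambda_i<1$, forces $\lambda_j=\lambda_0$ for every $j$. So all $\lambda_i$ equal a common value $\lambda\in[0,1)$.
\item The height $(d+1)\lambda$ must then be an integer, so $\lambda=k/(d+1)$ with $k=0,1,\dots,d$.
\end{itemize}
The corresponding points are $(0,\dots,0,k)$, and these are indeed lattice points. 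So $\Pi$ contains exactly one lattice point at each height $0,1,\dots,d$. The numerator is therefore $1+t+\cdots+t^d$, which gives both formulas.

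As a consistency check, the normalized volume is $|\det(w_0,\dots,w_d)|=d+1$, which matches $h^*(S_d,1)=d+1$. Computing this determinant also confirms that the $w_i$ are linearly independent, so $\Pi$ is a genuine parallelepiped. Their independence also follows from the affine independence already shown in Lemma~\ref{interior}.

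The only delicate point is the integrality argument in the first bullet: the fractional parts must all agree, and then the height condition pins them down. I do not expect a real obstacle here. An alternative route is to use the gauge of Proposition~\ref{gauge}. For this $\tau$ it reads $\gamma(z)=M+\sum_i(M-z_i)$ with $M=\max(0,\max_i z_i)$, so one can count the lattice points with $\gamma(z)\le m$ directly. I would only use that as a cross-check of the $d=1$ case $1+t$.
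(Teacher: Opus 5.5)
Your proposal is correct and follows essentially the same route as the paper. Both arguments lift to the cone over $S_d$ and reduce to the lattice points of the half-open parallelepiped $\Pi$, which are exactly $(0,\ldots,0,k)$ for $k=0,\ldots,d$ because $\lambda_0-\lambda_j$ and the height $(d+1)\lambda_0$ must be integers. The only difference is that the paper proves the cone-section and unique-decomposition step directly rather than citing it.
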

\begin{proof}
In $\mathbb R^{d+1}$, put $w_i=(v_i,1)$ and call the last coordinate the height. Since $v_0,\ldots,v_d$ are affinely independent, $w_0,\ldots,w_d$ are linearly independent. The cone they generate,
\[
 C=\left\{\sum_{i=0}^d\lambda_iw_i:\lambda_i\ge0\right\}
\]
has section $\{(z,m):z\in mS_d\}$ at integer height $m$. Indeed, height equals $\sum_i\lambda_i$, so division by $m$ gives a convex combination when $m>0$; at height zero, all coefficients vanish. Counting lattice points of $C$ by height therefore gives $\Ehr_{S_d}(t)$.

For each lattice point of the cone, write uniquely $\lambda_i=n_i+a_i$, where $n_i\in\mathbb Z_{\ge0}$ and $0\le a_i<1$. Subtracting the integer vector $\sum_i n_iw_i$ leaves a lattice point in the half-open parallelepiped
\[
 \Pi=\left\{\sum_{i=0}^d a_iw_i:0\le a_i<1\right\}
\]
Conversely, adding any $\sum_i n_iw_i$ to a lattice point of $\Pi$ gives a lattice point of $C$. This representation is unique, so
\[
 \Ehr_{S_d}(t)=\frac{\sum_{q\in\Pi\cap\mathbb Z^{d+1}}t^{q_{d+1}}}
 {(1-t)^{d+1}}.
\]
Each factor $(1-t)^{-1}$ in the denominator records one of the $n_i$, since every $w_i$ has height one.

We now determine the lattice points of $\Pi$. The $i$th spatial coordinate of $\sum_i a_iw_i$ is $a_0-a_i$. It is an integer strictly between $-1$ and one, so it is zero; hence $a_i=a_0$. The last coordinate is therefore $(d+1)a_0$, which is an integer if and only if
\[
 a_0=\frac{k}{d+1},\qquad k=0,1,\ldots,d.
\]
All these values are allowed, and the resulting lattice points are exactly $(0,k)$, with one at each height. Substitution into the series formula proves the assertion.
\end{proof}

\section{The product formula for totally ordered equivalence classes}\label{app:equivalence}
If the quotient poset is a total order, the original preorder is an ordinal sum of preorders each consisting of a single equivalence class. The simplex calculation and the main theorem therefore give every $h^*$-coefficient and show explicitly how it depends on the sizes of the equivalence classes.

Let the equivalence classes be $E_1<\cdots<E_k$, with $d_j=|E_j|$. The preorder on each $E_j$ has a single equivalence class, so Corollary~\ref{multiple} and Proposition~\ref{equivalenceclass} give
\[
 h_\tau^*(t)=\prod_{j=1}^k(1+t+\cdots+t^{d_j}),\qquad
 \Vol_{\mathbb Z}(\Rpoly{\tau})=\prod_{j=1}^k(d_j+1).
\]
In particular, the $h^*$-polynomial of a total order on $d$ elements is $(1+t)^d$, whereas that of $d$ equivalent elements is $1+t+\cdots+t^d$. In the latter case, the quotient poset has only one element, but the polytope still has dimension $d$. This explains why we retain all coordinates of the original ground set. Moreover, the product depends only on the multiset of equivalence-class sizes: reordering these totally ordered classes leaves the $h^*$-polynomial unchanged.

\end{document}